\newtheorem{theorem}{Theorem}
\newtheorem{proposition}[theorem]{Proposition}
\newtheorem{remark}[theorem]{Remark}
\newtheorem{definition}[theorem]{Definition}
\newtheorem{example}[theorem]{Example}
\numberwithin{equation}{section}
\numberwithin{theorem}{section}
\newcommand{\R}{\mathbb{R}}
\newcommand{\N}{\mathbb{N}}
\newcommand{\1}{\ell^1}
\newcommand{\2}{\ell^2}
\newcommand{\3}{\ell^\infty}
\newcommand{\xdagtil}{\widetilde{x}^\dagger}
\def\xdag{x^\dagger}
\def\xad{{x_\alpha^\delta}}
\def\yd{y^\delta}
\def\sgn{{\rm sgn}}
\begin{document}


  \author{Jens Flemming}
  \author{Bernd Hofmann}
  \author{Ivan Veseli\'c}
  \address{Technische Universit\"at Chemnitz, Fakult\"at f\"ur Mathematik, 09107 Chemnitz, Germany}
  \title{On $\ell^1$-regularization in light of Nashed's ill-posedness concept}
\begin{abstract}
Based on the powerful tool of variational inequalities, in recent papers convergence rates results on $\mathbf{\ell^1}$-regularization for ill-posed inverse problems
have been formulated in infinite dimensional spaces under the condition that the sparsity assumption slightly fails,
but the solution is still in $\ell^1$.
In the present paper we improve those convergence rates results and apply them to the Ces\'aro operator equation in $\2$ and to specific denoising problems.
Moreover, we formulate in this context relationships between Nashed's types of ill-posedness and mapping properties like
compactness and strict singularity.
\end{abstract}

  \keywords{Regularization, Linear Ill-Posed Operator Equations, Error Estimates, Convergence Rates, Sparsity, Variational Source Condition}
  \subjclass{47A52, 65J20, 49J40}

\maketitle

\section{Introduction} \label{s1}

Variational sparsity regularization based on $\1$-norms became of significant interest in the past ten years with respect to inverse problems applications, e.g.~in imaging (cf., e.g.,~\cite{Scherzetal09}), but also with respect to the progress in
regularization theory for the treatment of ill-posed operator equations in infinite dimensional Hilbert and Banach spaces (cf., e.g.,~\cite{BeBu11,BreLor09,Grasm10,Grasmei11,Lorenz08,RamRes10}). Moreover, with focus on sparsity, the use of $\ell^1$-regularization can be motivated for specific classes of well-posed problems, too (cf., e.g.,~\cite{CanRomTao06}). Based on the powerful tool of variational inequalities (also called variational source conditions), in \cite{BurFleHof13} convergence rates results on $\mathbf{\ell^1}$-regularization for linear ill-posed operator equations have been formulated in infinite dimensional spaces under the condition that the sparsity assumption slightly fails, but the solution is still in $\ell^1$. In the present paper, we improve those results and illustrate the improvement level with respect to the associated convergence rates for the Ces\'aro operator equation in $\2$ and for specific denoising problems. Since the variational inequality approach requires injectivity of 
the forward operator (cf.~\cite[Proposition~5.6]{BurFleHof13}), we restrict all considerations in this paper to {\sl injective} linear forward operators which also ensure uniquely determined solutions for the corresponding linear operator equations. The focus on injectivity is also motivated
by the fact that the ill-posedness concept of Section~\ref{s4} suggested by M.~Z.~Nashed (cf.~\cite{Nashed86}) would require substantial technical refinements in a general Banach space setting if non-injective operators were included.

Let $\widetilde A: X \to Y$  be an {\sl injective} and {\sl bounded linear operator} mapping between an {\sl infinite dimensional separable Hilbert space} $X$ and an infinite dimensional {\sl Banach space} $Y$ with norms $\|\cdot\|_X$ and $\|\cdot\|_Y$, respectively. We are searching for the uniquely determined solution $\xdagtil \in X$ of the linear operator equation
\begin{equation}
  \label{eq:tildeopeq}
  \widetilde A\, \widetilde x\,=\,y, \qquad \widetilde x \in X,\quad y \in \mathcal{R}(\widetilde A),
\end{equation}
where $\mathcal{R}(\widetilde A)$ denotes the range of $\widetilde A$. Typically, instead of $y$ only noisy data $\yd \in Y$ are available. In this context, we consider the deterministic noise model
\begin{equation}  \label{eq:noise}
   \|\yd-y\|_Y \le \delta
\end{equation}
with given noise level $\delta>0$.

If the operator $\widetilde A$ is {\sl normally solvable}, i.e.\ its range is a closed subset in $Y$, then solving
the equation (\ref{eq:tildeopeq}) is a {\sl well-posed problem}. Consequently, for injective $\widetilde A$ the inverse ${\widetilde A}^{-1}: \mathcal{R}(\widetilde A) \subset Y \to X$ exists and is also a bounded linear operator.
If, on the other hand, the range of $\widetilde A$ is not closed, the inverse ${\widetilde A}^{-1}$
is an unbounded linear operator and solving the equation (\ref{eq:tildeopeq}) is an {\sl ill-posed problem}. This means that small perturbations in the right-hand side may lead to arbitrarily large error in the solution. Then regularization methods are required for obtaining stable approximate solutions to equation (\ref{eq:tildeopeq}).

As usual we consider in the sequel the Banach spaces $\ell^q$, $1 \le q < \infty$, and $\3$ of infinite sequences of real numbers with finite norms
$$\|x\|_{\ell^q}:=\left( \sum \limits_{k=1}^\infty |x_k|^q\right)^{1/q}\quad\text{and}\quad \|x\|_{\3}:=\sup_{k \in \N} |x_k|.$$
The Banach space $c_{\scriptscriptstyle 0}$ consists of
the real sequences $(x_k)_{k \in \N}$ with $\lim \limits_{k \to \infty} |x_k|=0$ and is also equipped with the norm $\|x\|_{c_0}:=\sup_{k \in \N} |x_k|$.  Moreover, by $\ell^0$ we denote  the
set of sparse sequences, where a sequence is \emph{sparse} if only a finite number of components is not zero.
For such sequences the number of nonzero components is given by
$$\|x\|_{\ell^0}:=\sum_{k=1}^\infty|\sgn(x_k)|\quad\text{with}\quad
\sgn(t):=\begin{cases}1,&t>0,\\0,&t=0,\\-1,&t<0.\end{cases}$$

Throughout this paper we fix an orthonormal basis $\{u^{(k)}\}_{k \in \N}$ in the Hilbert space $X$.
By $x=(x_k)_{k\in\N}\in\2$ we denote the infinite sequence of corresponding Fourier coefficients of $\widetilde{x}$, i.e.\
$$\widetilde x=\sum \limits_{k=1}^\infty x_k u^{(k)}.$$
The synthesis operator  $L: \1 \to X$ defined as $Lx:=\widetilde x$ is an injective bounded linear operator.
Note that this operator can be extended to $\2$, but in our setting we define it only on $\1$.

The focus of our studies is on \emph{almost sparse} solutions $\xdagtil$ to equation (\ref{eq:tildeopeq}). This means that only a finite number of coefficients $x_k^\dagger$ from the infinite sequence $\xdag=(\xdag_k)_{k \in \N}$ is relevant.
Here we do not require strict sparsity, $\xdag \in \ell^0$, but we allow an infinite number of nonzero coefficients if they decay fast enough.
Precisely, we assume $\xdag \in \1$ throughout this paper.

Introducing the operator $A:=\widetilde A \circ L: \1 \to Y$ our goal is to recover the solution $\xdag\in\1$ of
\begin{equation}
  \label{eq:opeq}
  Ax\,=\,y, \qquad x \in \1,\quad y \in \mathcal{R}(A),
\end{equation}
from noisy data $\yd \in Y$ satisfying (\ref{eq:noise}).
The following proposition shows that solving this equation is always an ill-posed problem, even if
the original equation (\ref{eq:tildeopeq}) is well-posed.

For the proof and for further reference we note that the synthesis operator $L$ is a composition $L=U \circ \mathcal{E}_2$ of the embedding operator
$\mathcal{E}_2:\1\to\2$ and the Riesz isomorphism $U:\2\to X$. Thus, the operator $A$ can be written as a composition
\begin{equation} \label{eq:three}
A=\widetilde A \circ U \circ \mathcal{E}_2
\end{equation}
of three injective bounded linear operators.

\begin{proposition} \label{pro:alwaysill}
The range of $A$ is not closed.
\end{proposition}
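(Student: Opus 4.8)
The plan is to show that $A$ fails to be bounded below, which for an injective bounded linear operator is equivalent to having non-closed range. Suppose, for contradiction, that $\mathcal{R}(A)$ were closed. Since $A=\widetilde A\circ U\circ\mathcal{E}_2$ is a composition of three injective operators (as noted before the statement), $A$ is itself injective, so $A:\1\to\mathcal{R}(A)$ is a continuous linear bijection onto a Banach space. The bounded inverse theorem then yields a constant $c>0$ with
\[
  \|x\|_{\1}\,\le\,c\,\|Ax\|_Y\qquad\text{for all }x\in\1 .
\]
It therefore suffices to construct a sequence in $\1$ violating this coercivity estimate.

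The crucial observation is that the only property of the operator $B:=\widetilde A\circ U:\2\to Y$ that I need is its \emph{boundedness}, never its invertibility; this is precisely what makes the conclusion hold even when the original equation \eqref{eq:tildeopeq} is well-posed. Writing $A=B\circ\mathcal{E}_2$ and setting $C:=\|B\|$, I obtain for every $x\in\1$ the upper bound
\[
  \|Ax\|_Y=\|B\,\mathcal{E}_2 x\|_Y\,\le\,C\,\|x\|_{\2}.
\]
Hence a contradiction to the estimate above will follow as soon as I exhibit elements of $\1$ whose $\2$-norm is far smaller than their $\1$-norm.

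Such elements arise from the strict gap between the norms $\|\cdot\|_{\1}$ and $\|\cdot\|_{\2}$, i.e.\ from the non-closedness of the embedding $\mathcal{E}_2$. Concretely, I would take $x^{(n)}:=\frac1n\sum_{k=1}^{n}e^{(k)}\in\1$, where $e^{(k)}$ is the $k$-th canonical unit sequence. Then $\|x^{(n)}\|_{\1}=1$ for every $n$, whereas $\|x^{(n)}\|_{\2}=n^{-1/2}\to 0$. Combining this with the upper bound gives $\|Ax^{(n)}\|_Y\le C\,n^{-1/2}\to 0$, while the coercivity estimate would force $1=\|x^{(n)}\|_{\1}\le c\,\|Ax^{(n)}\|_Y\to 0$, a contradiction. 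Consequently $\mathcal{R}(A)$ cannot be closed.

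I expect the only point requiring genuine care to be the reduction carried out in the first paragraph: rephrasing closedness of the range as the two-sided coercivity estimate, which relies on injectivity of $A$ together with the bounded inverse theorem. Everything afterwards is driven solely by the non-closedness of $\mathcal{E}_2:\1\to\2$, and the construction of $x^{(n)}$ is routine. It is worth emphasizing that the mechanism is entirely concentrated in this embedding: since only the continuity of $\widetilde A\circ U$ enters, the ill-posedness of \eqref{eq:opeq} is an artifact of passing from $X$ to the sequence space $\1$, independent of any further mapping properties of $\widetilde A$.
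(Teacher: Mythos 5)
Your proof is correct, but it takes a genuinely different route from the paper. The paper argues purely topologically: assuming $\mathcal{R}(A)$ closed, it pulls this set back through the continuous injective maps $\widetilde A$ and $U$ in the factorization (\ref{eq:three}) (the preimage of a closed set under a continuous map is closed, and injectivity identifies these preimages as $\mathcal{R}(L)$ and $\mathcal{R}(\mathcal{E}_2)=\1$, respectively), concluding that $\1$ would be a closed, dense, hence improper subspace of $\2$ --- a contradiction. You instead make the argument quantitative: closedness of $\mathcal{R}(A)$ plus injectivity gives, via the bounded inverse theorem, a coercivity estimate $\|x\|_{\1}\le c\,\|Ax\|_Y$, which you then defeat with the explicit sequence $x^{(n)}=\frac1n\sum_{k=1}^n e^{(k)}$ satisfying $\|x^{(n)}\|_{\1}=1$ but $\|x^{(n)}\|_{\2}=n^{-1/2}\to 0$, using only $\|Ax\|_Y\le \|\widetilde A\circ U\|\,\|x\|_{\2}$. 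Both proofs isolate the same root cause --- the embedding $\mathcal{E}_2:\1\to\2$ --- and both correctly show that no property of $\widetilde A$ beyond bounded injectivity is used, so ill-posedness persists even when (\ref{eq:tildeopeq}) is well-posed. What the paper's soft argument buys is economy of means: it needs only elementary topology and the density of $\1$ in $\2$, with no appeal to the open mapping theorem. What your argument buys is an explicit witness of the instability (unit $\1$-norm elements with arbitrarily small image norm), which exhibits concretely how the coercivity fails rather than merely deducing that it must; the price is the invocation of the bounded inverse theorem, a Baire-category result.
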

\begin{proof}
Assume that $\mathcal{R}(A)$ is closed. The full preimage of $\mathcal{R}(A)$ with respect to $\widetilde A$ is $\mathcal{R}(L)$.
Thus, $L$ has closed range, too. Looking at the composition (\ref{eq:three}), the full preimage of $\mathcal{R}(L)$ with
respect to $U$ is $\mathcal{R}(\mathcal{E}_2)=\1$. Consequently, $\1$ would be a closed subspace of $\2$. Since $\1$ is
dense in $\2$ this yields the contradiction $\1=\2$.
\end{proof}

The proposition shows that equation (\ref{eq:opeq}) requires regularization in order to obtain stable approximate solutions.
For this purpose we use a variant of variational regularization, called $\1$-regularization, where regularized solutions, denoted by $\xad$, are minimizers of the extremal problem
\begin{equation}
  \label{eq:TikBanach}
 \frac{1}{p}\|Ax-y^\delta\|_Y^p + \alpha \,\|x\|_{\1}
 \to \min, \quad \mbox{subject to} \quad x  \in \1.
\end{equation}
Here, $1 < p < \infty$ is some exponent and $\alpha>0$ is a regularization parameter. This regularization parameter is chosen in an appropriate manner, a priori as $\alpha=\alpha(\delta)$ depending on the noise level $\delta$, or  a posteriori as $\alpha=\alpha(\delta,\yd)$  depending also on the present regularized solution $\yd$ (for details see Sections~\ref{s2} and \ref{s3} below).

We are interested in error estimates
\begin{equation} \label{eq:genrates-1}
\|\xad-\xdag\|_{\1} \le C_{\xdag}\,\varphi(\delta)\qquad \mbox{for all} \qquad 0 < \delta \le \overline \delta,
\end{equation}
where the positive constant $C_{\xdag}$ may depend on the solution $\xdag$ but not on the noise level $\delta>0$. The estimates (\ref{eq:genrates-1})
can be interpreted as convergence rates
\begin{equation} \label{eq:genrates1}
\|\xad-\xdag\|_{\1}= O(\varphi(\delta)) \qquad \mbox{as} \qquad  \delta \to 0
\end{equation}
with rate functions $\varphi$ which are concave index functions.
Following \cite{MatPer03} and \cite{HofMat07} we call $\varphi: (0,\infty) \to (0,\infty)$ an index function if it is a continuous and strictly
increasing function with $\lim \limits_{t \to +0} \varphi(t)=0$.

The article is organized as follows: in the next section we briefly summarize results on existence, stability and convergence
of $\ell^1$-regularized solutions. Section~\ref{s3} contains the main theorem of this paper which improves the result of Theorem~5.2 from \cite{BurFleHof13} and can lead to better convergence rates.
Moreover, Section~\ref{s4} provides some insight into the interplay between Nashed's ill-posedness concept and mapping properties of the forward operator like compactness and strict singularity.
In the final Section~\ref{sec:denoise} we apply our findings to a problem of denoising type.

\section{Existence, stability and convergence of $\ell^1$-regularized solutions}    \label{s2}

From the general theory of Tikhonov regularization (cf.,~e.g.,~\cite[Section~3]{HKPS07}, \cite[Sections~4.1.1 and 4.1.2]{Schusterbuch12} and \cite[Section~3.1]{ItoJin14}) one can
infer the existence and stability of
$\ell^1$-regularized solutions $\xad$ as well as its convergence for $\delta \to 0$ to the uniquely determined solution $\xdag$ of equation (\ref{eq:opeq}) for appropriate choices of the regularization parameter $\alpha>0$.
For this purpose we summarize the results of Proposition~2.8 and Remark~2.9 from \cite{BurFleHof13} in the following proposition taking into account that the separable Banach space $c_0$ is a predual space of $\ell^1$,
that the operator $A: \1 \to Y$ is sequentially weak*-to-weak continuous, and that $\1$ satisfies the weak* Kadec-Klee property (for a proof see,~e.g.,~\cite[Lemma~2.2]{BoHo13}).
The weak*-to-weak continuity of $A$ can be shown along the lines of the proof of Lemma~2.7 in \cite{BurFleHof13} when $\mathcal{R}(A^*) \subset c_0$ is valid for the adjoint operator $A^*: Y^* \to \3$ to $A$.
In our setting,  $\mathcal{R}(A^*) \subset c_0$ is a consequence of the weak convergence $\widetilde A \,u^{(k)} \rightharpoonup 0$ in $Y$ for the prescribed orthonormal system $\{u^{(k)}\}_{k \in \N}$ in the Hilbert space $X$, which implies the weak convergence of formula
(\ref{eq:weakA}) in Remark~\ref{rem:distinguish} below such that the corresponding part of the proof of \cite[Proposition~2.4]{BurFleHof13} applies even if (\ref{eq:strongA}) is violated.   

\begin{proposition} \label{pro:exist}
For all $1<p<\infty$, $\alpha>0$ and $y^\delta \in Y$ there exist uniquely determined minimizers $\xad \in \1$ of the extremal problem (\ref{eq:TikBanach}). These $\1$-regularized solutions are always sparse, i.e.~they satisfy
$$\xad \in  \ell^0.$$
Furthermore, they are always stable with respect to the data, i.e., small perturbations in $y^\delta$ in the norm topology of $Y$ lead only to small changes in $\xad$ with respect to the $\1$-norm.
\par
If $\delta_n \to 0$ and if the regularization parameters $\alpha_n=\alpha(\delta_n,y^{\delta_n})$ are chosen such that $$\alpha_n \to 0 \qquad \mbox{and} \qquad \frac{\delta_n^p}{\alpha_n}\to 0 \qquad \mbox{as} \qquad n \to \infty,$$
then
\begin{equation} \label{eq:normconvergence}
\lim \limits_{n \to \infty} \|x_{\alpha_n}^{\delta_n}-\xdag\|_{\1}\,=\,0\,.
\end{equation}
\end{proposition}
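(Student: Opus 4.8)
The plan is to establish the five assertions---existence, uniqueness, sparsity, stability, and convergence---largely by the direct method, exploiting the duality $\1=(c_0)^*$. For \textbf{existence} I would fix $1<p<\infty$, $\alpha>0$, $\yd$ and take a minimizing sequence $(x_n)$ for the functional $T_\alpha^\delta(x):=\tfrac1p\|Ax-\yd\|_Y^p+\alpha\|x\|_{\1}$. Since the penalty dominates, the sublevel set $\{T_\alpha^\delta\le c\}$ is $\1$-bounded, so $(x_n)$ is bounded in $\1$. Because $c_0$ is separable and $\1=(c_0)^*$, Banach--Alaoglu yields a weak*-convergent subsequence $x_{n_j}\rightharpoonup^* \bar x$. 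It then remains to check weak*-lower semicontinuity of $T_\alpha^\delta$: the dual norm $\|\cdot\|_{\1}$ is weak*-lower semicontinuous automatically, and the misfit $x\mapsto\|Ax-\yd\|_Y^p$ is weak*-lower semicontinuous because $A$ is sequentially weak*-to-weak continuous (as recalled before the proposition, via $\mathcal R(A^*)\subset c_0$) and $\|\cdot\|_Y^p$ is weakly lower semicontinuous. Hence $\bar x$ is a minimizer.

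For \textbf{sparsity} I would use the optimality condition $0\in\partial T_\alpha^\delta(\xad)$. By the convex sum and chain rules (the misfit is finite and continuous, so the qualification holds) this provides a subgradient $\xi\in\partial\|\xad\|_{\1}$ of the form $\xi=-\tfrac1\alpha A^*\eta$, where $\eta$ lies in the (possibly set-valued) duality mapping of $\tfrac1p\|\cdot\|_Y^p$ at $A\xad-\yd$. Thus $\xi\in\mathcal R(A^*)\subset c_0$, so $\xi_k\to0$. On the other hand, membership in the subdifferential of the $\1$-norm forces $\xi_k=\sgn(\xad_k)=\pm1$ at every index with $\xad_k\neq0$. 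Since only finitely many coordinates of a $c_0$-sequence can have modulus $1$, the solution has only finitely many nonzero entries, i.e.\ $\xad\in\ell^0$. This is the cleanest part, and the one where $\mathcal R(A^*)\subset c_0$ is decisive.

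For \textbf{stability} and \textbf{convergence} I would rerun the compactness-plus-lower-semicontinuity machinery, upgraded to norm convergence. For convergence, minimality tested against $\xdag$ gives $\tfrac1p\|Ax_{\alpha_n}^{\delta_n}-y^{\delta_n}\|_Y^p+\alpha_n\|x_{\alpha_n}^{\delta_n}\|_{\1}\le\tfrac1p\delta_n^p+\alpha_n\|\xdag\|_{\1}$, whence $\|Ax_{\alpha_n}^{\delta_n}-y^{\delta_n}\|_Y\to0$ (so $Ax_{\alpha_n}^{\delta_n}\to y$) and $\limsup_n\|x_{\alpha_n}^{\delta_n}\|_{\1}\le\|\xdag\|_{\1}$, using $\alpha_n\to0$ and $\delta_n^p/\alpha_n\to0$. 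A weak*-limit point $\bar x$ of the bounded sequence satisfies $A\bar x=y$ by weak*-to-weak continuity, hence $\bar x=\xdag$ by injectivity of $A$; weak*-lower semicontinuity of the norm then sandwiches $\|x_{\alpha_n}^{\delta_n}\|_{\1}\to\|\xdag\|_{\1}$. At this point I would invoke the weak* Kadec--Klee property of $\1$: weak*-convergence together with convergence of norms yields $\1$-norm convergence, and a subsequence-of-every-subsequence argument promotes it to (\ref{eq:normconvergence}). Stability follows identically with $y^{\delta_n}\to\yd$ replacing the vanishing-noise limit.

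The step I expect to be the main obstacle is \textbf{uniqueness}, since neither the misfit nor the $\1$-penalty is strictly convex. Here I would first note that the minimizer set is convex and that $T_\alpha^\delta$ is constant on it, which forces both $\|Ax-\yd\|_Y$ and $\|x\|_{\1}$ to be constant there; inspecting the equality cases of the triangle inequality together with the strict convexity of $t\mapsto\tfrac1p t^p$ on $[0,\infty)$ then pins down the common residual norm and the coordinate signs. The genuinely delicate point is to pass from ``equal residual norm'' to ``equal residual'', after which injectivity of $A$ closes the argument; this is precisely where the geometry of the problem---rather than mere convexity---must enter, and I would concentrate the effort there.
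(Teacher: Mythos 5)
Your existence, sparsity, stability and convergence arguments are correct and follow essentially the route the paper itself takes: the paper gives no self-contained proof, but reduces the proposition to the general Tikhonov theory and to Proposition~2.8/Remark~2.9 of \cite{BurFleHof13}, after verifying exactly the three structural facts your plan rests on, namely that $c_0$ is a separable predual of $\1$ (sequential Banach--Alaoglu), that $\mathcal{R}(A^*)\subset c_0$ makes $A$ sequentially weak*-to-weak continuous (this gives weak*-lower semicontinuity of the misfit, and your $c_0$-subgradient argument for sparsity is precisely the argument of the cited source), and that $\1$ has the weak* Kadec--Klee property (the upgrade from weak* convergence plus convergence of norms to $\1$-convergence, cf.\ \cite[Lemma~2.2]{BoHo13}).

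The genuine gap is the one you flagged yourself: uniqueness is not proved in your proposal, and it cannot be closed by ``concentrating effort'' within the stated hypotheses, because the missing step needs a rotundity property that a general Banach space $Y$ lacks. Your reduction (equal residual norms $r$, equal $\1$-norms, coordinatewise sign agreement) is fine, and if the norm of $Y$ is \emph{strictly convex} you can finish: the midpoint of two minimizers $x_1,x_2$ is again a minimizer, so $\|(Ax_1-\yd)+(Ax_2-\yd)\|_Y=2r$, which by strict convexity and equality of norms forces $Ax_1=Ax_2$, and injectivity of $A$ gives $x_1=x_2$. Without strict convexity the conclusion genuinely fails. Take $Y=\3$, $p=2$, $\widetilde A:\2\to\3$ defined by $\widetilde A z=(z_1+z_2,\,z_1-z_2,\,z_3,z_4,\dots)$ (injective, bounded, and $\mathcal{R}(A^*)\subset c_0$ still holds since $Ae^{(k)}=e^{(k)}\rightharpoonup 0$ in $\3$ for $k\ge 3$), data $\yd=e^{(1)}$ and $0<\alpha<\tfrac12$. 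Then every $x^{(s)}=\tfrac12\bigl(1-\alpha+s,\,1-\alpha-s,\,0,\dots\bigr)$ with $|s|\le\alpha$ satisfies $Ax^{(s)}-\yd=(-\alpha,s,0,\dots)$, hence has the common value $\tfrac{\alpha^2}{2}+\alpha(1-\alpha)$; moreover $-\alpha e^{(1)}\in\ell^1\subset Y^*$ is a subgradient of $\tfrac12\|\cdot\|_\3^2$ at the residual of $x^{(0)}$, and $A^*(-\alpha e^{(1)})=-\alpha(1,1,0,\dots)$ is cancelled by $\alpha(1,1,0,\dots)\in\alpha\,\partial\|\cdot\|_{\1}(x^{(0)})$, so $x^{(0)}$ is a global minimizer and therefore the whole segment consists of (sparse) global minimizers of (\ref{eq:TikBanach}). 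All of them are compatible with your sign and norm conditions, so no amount of convexity bookkeeping can separate them. Consequently, the uniqueness assertion must either be derived under an explicit strict-convexity assumption on $Y$ or be taken over from the cited Proposition~2.8 of \cite{BurFleHof13}, whose setting supplies the structure that the statement in the present paper leaves implicit; as written, this part of your proof remains incomplete.
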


\section{Improved convergence rates}    \label{s3}

The norm convergence (\ref{eq:normconvergence}) can be arbitrarily slow.
In order to obtain convergence rates, also for the $\ell^1$-regularization with regularized solutions $\xad$ defined as minimizers to problem (\ref{eq:TikBanach}),
a link condition between the smoothness of the solution $\xdag$ to (\ref{eq:opeq}) and the forward operator $A$ is required. From the studies and results of the recent paper \cite{BurFleHof13} we immediately derive the following theorem, where this link condition is a range condition imposed on all unit sequences $e^{(k)}=(0,\ldots,0,1,0,\ldots)$, $k \in \N$, with respect to the adjoint operator $A^*$.
We mention here that $\{e^{(k)}\}_{k \in \N}$ represents a Schauder basis in the Banach spaces $\ell^q$ for all $1 \le q <\infty$.

\begin{theorem} \label{thm:BFH}
Let the operator $A\colon \ell^1 \to Y$ from equation (\ref{eq:opeq})
be such that there exist elements $f^{(k)} \in Y^*$, $k \in \N$, satisfying the range conditions
\begin{equation} \label{eq:allrange}
e^{(k)}\,=A^*f^{(k)}.
\end{equation}
Then a variational inequality
\begin{equation} \label{eq:vi}
\|x-\xdag\|_{\1}\le \|x\|_{\1}-\|\xdag\|_{\1}+ \varphi_1(\|Ax-A\xdag\|_Y) \qquad \mbox{for all} \quad x \in \1
\end{equation}
is valid for the concave index function
\begin{equation} \label{eq:phiBFH}
\varphi_1(t)=2\inf_{n\in\mathbb{N}}\left(\sum_{k=n+1}^\infty\vert x^\dagger_k\vert
+t\sum_{k=1}^n\|f^{(k)}\|_{Y^*}\right).
\end{equation}
This yields the convergence rate
\begin{equation} \label{eq:genrates}
\|\xad-\xdag\|_{\1}= O(\varphi_1(\delta)) \qquad \mbox{as} \qquad \delta \to 0
\end{equation}
for $\1$-regularized solutions $\xad$ and for the uniquely determined solution $\xdag \in \1$ of equation (\ref{eq:opeq}) provided that the regularization parameter
$\alpha=\alpha(\delta,\yd)$ is chosen
appropriately, e.g.\ according to the discrepancy principle
\begin{equation} \label{eq:discr}
\tau_1 \delta \le
 \|Ax^\delta_{\alpha(\delta,\yd)}-\yd\|_Y \le \tau_2 \delta
\end{equation}
for prescribed values $1 < \tau_1 \le \tau_2<\infty$.
\end{theorem}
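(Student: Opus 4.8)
The plan is to split the claim into two largely independent parts: first to establish the variational inequality (\ref{eq:vi}) with precisely the stated $\varphi_1$, and then to read off the rate (\ref{eq:genrates}) by feeding this inequality into the standard convergence-rate machinery for variational regularization that is summarized in Section~\ref{s2} and developed in \cite{BurFleHof13}. Since the genuinely new content lives in the variational inequality, I would concentrate the effort there and treat the passage to (\ref{eq:genrates}) as an application of existing theory.

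For the variational inequality I would expand all three $\1$-norms coordinatewise and reduce the claim to a summand-by-summand estimate. Writing $t:=\|Ax-A\xdag\|_Y$, the quantity to be controlled is
\begin{equation*}
\|x-\xdag\|_{\1}-\|x\|_{\1}+\|\xdag\|_{\1}=\sum_{k=1}^\infty\bigl(|x_k-\xdag_k|-|x_k|+|\xdag_k|\bigr).
\end{equation*}
For each index $k$ two elementary inequalities are available simultaneously: the triangle inequality $|x_k-\xdag_k|\le|x_k|+|\xdag_k|$ bounds the summand by $2|\xdag_k|$, while the reverse triangle inequality $|x_k|\ge|\xdag_k|-|x_k-\xdag_k|$ bounds it by $2|x_k-\xdag_k|$. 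Hence every summand is at most $2\min\{|\xdag_k|,\,|x_k-\xdag_k|\}$, and the key step is to exploit both bounds at once on complementary ranges of indices.

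The range condition then converts the second bound into a data term. Reading $e^{(k)}\in\3=(\1)^*$ as the $k$-th coordinate functional, (\ref{eq:allrange}) gives $x_k=\langle e^{(k)},x\rangle=\langle A^*f^{(k)},x\rangle=\langle f^{(k)},Ax\rangle$ for all $x\in\1$, whence $|x_k-\xdag_k|\le\|f^{(k)}\|_{Y^*}\,t$. Splitting the series at an arbitrary $n\in\N$, estimating the head indices $k\le n$ by $2\|f^{(k)}\|_{Y^*}t$ and the tail indices $k>n$ by $2|\xdag_k|$, and finally taking the infimum over $n$, reproduces exactly the function $\varphi_1(t)$ in (\ref{eq:phiBFH}) and thus yields (\ref{eq:vi}).

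It remains to verify that $\varphi_1$ is a concave index function and then to invoke the rate theorem. Concavity is immediate because $\varphi_1$ is (twice) a pointwise infimum of functions affine in $t$; continuity, the limit $\varphi_1(t)\to0$ as $t\to+0$, and strict monotonicity follow from $\xdag\in\1$ (so the tails $\sum_{k>n}|\xdag_k|$ vanish as $n\to\infty$) together with finiteness and positivity of each $\|f^{(k)}\|_{Y^*}$, the latter being forced by $e^{(k)}\neq0$. With (\ref{eq:vi}) and these properties of $\varphi_1$ secured, the rate (\ref{eq:genrates}) under the discrepancy principle (\ref{eq:discr}) is a direct application of the known results cited in Section~\ref{s2}. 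The main obstacle is therefore not any individual estimate, each of which is elementary, but rather the bookkeeping that makes the pieces fit: correctly identifying $e^{(k)}$ with the coordinate functional paired with $Ax$ through $A^*$, and confirming that the resulting infimum $\varphi_1$ genuinely qualifies as a concave index function so that the abstract convergence-rate theorem applies.
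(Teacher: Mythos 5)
Your proposal is correct and follows essentially the same route as the paper: your summand-by-summand bound is exactly the splitting inequality (\ref{eq:Pythagoras}) (Lemma~5.1 of \cite{BurFleHof13}) that the paper invokes, and your conversion of coordinate errors into data terms via $x_k-\xdag_k=\langle f^{(k)},Ax-A\xdag\rangle_{Y^*\times Y}$, summed coordinatewise over $k\le n$ and followed by the infimum over $n$, is precisely the argument underlying Theorem~\ref{thm:BFH} (and, in its sharper joint form, the paper's proof of Theorem~\ref{thm:improved}). The only differences are cosmetic: you re-derive the splitting lemma from the two triangle inequalities instead of citing it, and, just as the paper does, you delegate the passage from the variational inequality (\ref{eq:vi}) to the rate (\ref{eq:genrates}) under the discrepancy principle (\ref{eq:discr}) to the standard literature.
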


\begin{remark} \label{rem:curious} {\rm
For more details concerning the consequences of variational inequalities and the role of the choice of the regularization parameter for obtaining convergence rates in regularization  we refer, for example, to
\cite{HKPS07} and \cite{AnzHofMat14,BoHo10,BoHo13,Flemmingbuch12,Grasm10,HofMat12}.
Making use of Gelfand triples it was shown in \cite{AnzHofRam13} that, for a wide range of applied inverse problems, the forward operators $A$ are such that link conditions of the form (\ref{eq:allrange}) apply for all $e^{(k)},\;k \in \N$. On the other hand, the paper \cite{FleHeg14} gives counterexamples where (\ref{eq:allrange}) fails for specific operators $A$, but alternative link conditions presented there can compensate this deficit.
}\end{remark}

We improve the convergence rate obtained in the theorem above as follows:

\begin{theorem}\label{thm:improved}
Theorem \ref{thm:BFH} remains true if $\varphi_1$ is replaced by $\varphi_2$ with
\begin{equation} \label{eq:phiimproved}
\varphi_2(t)=2\inf_{n\in\mathbb{N}}\left(\sum_{k=n+1}^\infty\vert x^\dagger_k\vert
+t\sup_{\substack{a_k\in\{-1,0,1\}\\k=1,\ldots,n}}\left\Vert\sum_{k=1}^n a_k f^{(k)}\right\Vert_{Y^*}\right).
\end{equation}
\end{theorem}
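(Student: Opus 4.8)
The plan is to revisit the proof of Theorem~\ref{thm:BFH} (i.e.\ of \cite[Theorem~5.2]{BurFleHof13}) and to isolate the single estimation step that is responsible for the difference between $\varphi_1$ and $\varphi_2$. First I note that the triangle inequality gives $\sup_{a_k\in\{-1,0,1\}}\|\sum_{k=1}^n a_k f^{(k)}\|_{Y^*}\le\sum_{k=1}^n\|f^{(k)}\|_{Y^*}$, hence $\varphi_2\le\varphi_1$ pointwise, so the replacement is genuinely an improvement. Because $\varphi_2$ enters the right-hand side of the variational inequality (\ref{eq:vi}) additively, it suffices to establish (\ref{eq:vi}) with $\varphi_2$ in place of $\varphi_1$ and to confirm that $\varphi_2$ is again a concave index function; the passage from the variational inequality to the convergence rate (\ref{eq:genrates}) under the discrepancy principle (\ref{eq:discr}) is then literally the same as before.

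To obtain the variational inequality I fix $x\in\1$ and $n\in\N$ and rewrite
\[
\|x-\xdag\|_{\1}-\|x\|_{\1}+\|\xdag\|_{\1}=\sum_{k=1}^\infty\bigl(|x_k-\xdag_k|+|\xdag_k|-|x_k|\bigr).
\]
Each summand $g_k:=|x_k-\xdag_k|+|\xdag_k|-|x_k|$ is nonnegative and satisfies the two elementary bounds $g_k\le 2|\xdag_k|$ (from $|x_k-\xdag_k|\le|x_k|+|\xdag_k|$) and $g_k\le 2|x_k-\xdag_k|$ (from $|\xdag_k|-|x_k|\le|x_k-\xdag_k|$). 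Splitting the series at the index $n$ and applying the first bound to the tail yields
\[
\|x-\xdag\|_{\1}-\|x\|_{\1}+\|\xdag\|_{\1}\le 2\sum_{k=n+1}^\infty|\xdag_k|+2\sum_{k=1}^n|x_k-\xdag_k|.
\]

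The crucial and only new step is the treatment of the head sum. Setting $a_k:=\sgn(x_k-\xdag_k)\in\{-1,0,1\}$ and using the range conditions (\ref{eq:allrange}), which through the duality between $\3=(\1)^*$ and $\1$ give $(x-\xdag)_k=\langle f^{(k)},A(x-\xdag)\rangle$ for every $k$, I keep the signs collected in one single functional before estimating:
\begin{align*}
\sum_{k=1}^n|x_k-\xdag_k|&=\sum_{k=1}^n a_k\langle f^{(k)},A(x-\xdag)\rangle=\left\langle\sum_{k=1}^n a_k f^{(k)},A(x-\xdag)\right\rangle\\
&\le\sup_{\substack{a_k\in\{-1,0,1\}\\k=1,\ldots,n}}\left\|\sum_{k=1}^n a_k f^{(k)}\right\|_{Y^*}\,\|A(x-\xdag)\|_Y.
\end{align*}
This is exactly where the gain occurs: the original argument bounds each term separately by $\|f^{(k)}\|_{Y^*}\|A(x-\xdag)\|_Y$ and then sums, whereas gathering the signs into the single dual element $\sum_{k=1}^n a_k f^{(k)}$ and only afterwards passing to the norm replaces $\sum_{k=1}^n\|f^{(k)}\|_{Y^*}$ by the generally smaller supremum. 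Inserting this into the previous display, recalling $\|Ax-A\xdag\|_Y=\|A(x-\xdag)\|_Y$, and taking the infimum over $n\in\N$ produces precisely (\ref{eq:vi}) with $\varphi_2$ from (\ref{eq:phiimproved}).

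It remains to verify that $\varphi_2$ is a concave index function. As a pointwise infimum of the affine functions $t\mapsto 2\sum_{k=n+1}^\infty|\xdag_k|+2t\,c_n$ with slopes $c_n:=\sup_{a}\|\sum_{k=1}^n a_k f^{(k)}\|_{Y^*}>0$ (positivity forced by $f^{(k)}\neq 0$, since $A^*f^{(k)}=e^{(k)}\neq 0$), the map $\varphi_2$ is concave; the limit $\varphi_2(t)\to 0$ as $t\to+0$ follows from $\xdag\in\1$, so that the tails $\sum_{k>n}|\xdag_k|$ can be made arbitrarily small. Continuity is automatic for a finite concave function, and strict monotonicity holds because the sequence $(c_n)$ is nondecreasing and hence bounded below by $c_1>0$, so that the active slope never degenerates; this reasoning is identical to the one used for $\varphi_1$ in \cite{BurFleHof13}. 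I expect no real obstacle in this last paragraph: the entire substance of the improvement lies in the sign-collecting estimate above, while the index-function check and the derivation of the rate are inherited verbatim from Theorem~\ref{thm:BFH}.
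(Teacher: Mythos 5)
Your proof is correct and takes essentially the same route as the paper's: the crucial sign-collecting step (keeping $\sum_{k=1}^n \sgn(x_k-\xdag_k)f^{(k)}$ as a single functional before passing to the dual norm) is identical, and the splitting inequality you derive elementarily via the bounds $g_k\le 2|\xdag_k|$ and $g_k\le 2|x_k-\xdag_k|$ is exactly \cite[Lemma~5.1]{BurFleHof13}, which the paper simply cites. Your additional observations --- that $\varphi_2\le\varphi_1$ by the triangle inequality and that $\varphi_2$ is indeed a concave index function --- are correct details that the paper leaves implicit.
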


\begin{proof}
From \cite[Lemma~5.1]{BurFleHof13} we know that
\begin{equation}
\label{eq:Pythagoras}
\|x-\xdag\|_{\1}-\|x\|_{\1}+\|\xdag\|_{\1} \le 2\left(\sum_{k=n+1}^\infty\vert x^\dagger_k\vert+\sum \limits_{k=1}^n |x_k-\xdag_k| \right).
\end{equation}
Observing
\begin{align*}
|x_k-\xdag_k|
&=\bigl(\sgn(x_k-\xdag_k)\bigr)(x_k-\xdag_k)\\
&=\bigl(\sgn(x_k-\xdag_k)\bigr)\langle e^{(k)},x-\xdag\rangle_{\3 \times \1}\\
&=\bigl(\sgn(x_k-\xdag_k)\bigr)\langle f^{(k)},Ax-A\xdag\rangle_{Y^\ast \times Y}
\end{align*}
the second sum on the right-hand side can be estimated above by
\begin{equation}
\label{eq:restricted-injectivity}
\begin{aligned}
\sum_{k=1}^n |x_k-\xdag_k|
&=\sum_{k=1}^n \bigl(\sgn(x_k-\xdag_k)\bigr)\langle f^{(k)},Ax-A\xdag\rangle_{Y^\ast \times Y}\\
&=\left\langle\sum_{k=1}^n \bigl(\sgn(x_k-\xdag_k)\bigr) f^{(k)},Ax-A\xdag\right\rangle_{Y^\ast \times Y}\\
&\leq\left\Vert\sum_{k=1}^n \bigl(\sgn(x_k-\xdag_k)\bigr) f^{(k)}\right\Vert_{Y^\ast}\|Ax-Ax^\dagger\|_Y\\
&\leq\|Ax-Ax^\dagger\|_Y\sup_{\substack{a_k\in\{-1,0,1\}\\k=1,\ldots,n}}\left\Vert\sum_{k=1}^n a_k f^{(k)}\right\Vert_{Y^*}
\end{aligned}
\end{equation}
and taking the infimum over all $n\in\N$ in the resulting inequality yields the variational inequality (\ref{eq:vi}).
\end{proof}

To understand the difference between $\varphi_1$ and $\varphi_2$ it may be helpful to note that $\varphi_1=\varphi_2$  if all $f^{(k)}$ are pairwise collinear. On the other hand, in the particular case that $Y^*=Y=\ell^2$
and $f^{(k)} =e^{(k)}$, we have
$\sum_{k=1}^n\|f^{(k)}\|_{Y^*}=n$ while
$\sup_{\substack{a_k\in\{-1,0,1\}\\k=1,\ldots,n}}\left\Vert\sum_{k=1}^n a_k f^{(k)}\right\Vert_{Y^*}=\sqrt n$.

In Example~\ref{ex:cesaro} and in Section~\ref{sec:denoise} we will show that the improved index function $\varphi_2$ yields better convergence rates for
some equations than the original function $\varphi_1$. The convergence rate result obtained in
\cite{FleHeg14} can be improved in a similar way.

\begin{example}[H\"older rates]\label{ex:hoelderrates} {\rm
If
\begin{equation}
\label{eq:Holeder-rates}
\sum_{k=n+1}^\infty\vert x^\dagger_k\vert\leq K_1\,n^{-\mu}\quad\text{and}\quad
\sup_{\substack{a_k\in\{-1,0,1\}\\k=1,\ldots,n}}\left\Vert\sum_{k=1}^n a_k f^{(k)}\right\Vert_{Y^*}\leq K_2\,n^\nu
\end{equation}
for $K_1,K_2\geq 0$ and $\mu,\nu>0$, then the convergence rate obtained in Theorem~\ref{thm:improved} is
\begin{equation} \label{firstrate}
\|\xad-\xdag\|_{\1}= O\bigl(\delta^{\frac{\mu}{\mu+\nu}}\bigr) \qquad \mbox{as} \qquad \delta \to 0
\end{equation}
(cf.\ \cite[Example~5.3]{BurFleHof13}).
}
\end{example}

\begin{example}[exponential decay of solution components]\label{ex:exponential} {\rm
If
$$\sum_{k=n+1}^\infty\vert x^\dagger_k\vert\leq K_1\,\exp(-n^\gamma)\quad\text{and}\quad
\sup_{\substack{a_k\in\{-1,0,1\}\\k=1,\ldots,n}}\left\Vert\sum_{k=1}^n a_k f^{(k)}\right\Vert_{Y^*}\leq K_2\,n^\nu$$
for $K_1,K_2\geq 0$ and $\gamma,\nu>0$, then the convergence rate obtained in Theorem~\ref{thm:improved} is
$$\|\xad-\xdag\|_{\1}= O\left(\delta\bigl(\log(1/\delta)\bigr)^{\frac{\nu}{\gamma}}\right) \qquad \mbox{as} \qquad \delta \to 0$$
(cf.\ \cite[Example~3.5]{BoHo13}).
}\end{example}

\begin{remark} \label{rem:distinguish} {\rm
Note that we always have weak convergence
\begin{equation} \label{eq:weakA}
Ae^{(k)}\rightharpoonup 0 \qquad \mbox{in}\;\;\;Y \quad \mbox{as} \qquad k\to\infty,
\end{equation}
because $\{e^{(k)}\}_{k\in\N}$ converges
weakly in $\2$ and $\widetilde A$ is weak-to-weak continuous since it is norm-to-norm continuous.
In \cite[Remark~2.5]{BurFleHof13} it was shown that the slightly stronger condition
\begin{equation} \label{eq:strongA}
\lim \limits_{k \to \infty} \|Ae^{(k)}\|_Y=0.
\end{equation}
enforces $\|f^{(k)}\|_{Y^\ast}\to\infty$ in Theorems~\ref{thm:BFH} and \ref{thm:improved}.

Obviously, condition (\ref{eq:strongA}) is satisfied if the underlying operator $\widetilde{A}$ is compact since compact
operators map weakly convergent sequences to norm convergent ones (note that this property is equivalent to compactness of $\widetilde{A}$ if
$X$ is a Hilbert or at least a reflexive Banach space, cf.\ \cite[Thm.~3.4.37]{Megginson98}).
On the other hand, one easily finds examples for noncompact operators which do not satisfy (\ref{eq:strongA}).
Choose, e.g., $X=Y=\2$ and let $\widetilde{A}$ be the identity. Then $\|Ae^{(k)}\|_Y=1$ for all $k \in \N$.
The question arises whether (\ref{eq:strongA}) is equivalent to compactness of $\widetilde A$.
The answer is `no' as the following example demonstrates.}
\end{remark}

\begin{example}[Ces\`aro operator]\label{ex:cesaro}{\rm
Let $X=Y=\2$ and define $\widetilde A:\2\to\2$ by
\begin{equation}\label{eq:cesaro}
[\widetilde Ax]_n=\frac{1}{n}\sum_{k=1}^n x_k.
\end{equation}
This operator is injective and noncompact with nonclosed range (see \cite[Solution 177]{Halmos82} or \cite{BroHalShi65}), but we have
$$\|Ae^{(k)}\|_{\2}^2=\sum_{n=k}^\infty\frac{1}{n^2}\to 0\quad\text{if}\quad k\to\infty.$$
Since with $f^{(1)}:=e^{(1)}$ and $f^{(k)}:=ke^{(k)}-(k-1)e^{(k-1)}$ for $k\geq 2$ assumption (\ref{eq:allrange}) of Theorems~\ref{thm:BFH}
and \ref{thm:improved} is satisfied, both convergence rates results apply to the specified operator.
\par
In the index function $\varphi_1$ in Theorem~\ref{thm:BFH} the second sum is
$$\sum_{k=1}^n\|f^{(k)}\|_{\2}=\sum_{k=1}^n\sqrt{(k-1)^2+k^2} \le \frac{n(n+1)}{\sqrt{2}}\,.$$
From
$$\sqrt{(k-1)^2+k^2}\geq\frac{1}{\sqrt{2}}(k-1+k)=\sqrt{2}k-\frac{1}{\sqrt{2}}$$
we even obtain a lower bound of the same order
$$\sum_{k=1}^n\|f^{(k)}\|_{\2}\geq\frac{\sqrt{2}n(n+1)}{2}-\frac{n}{\sqrt{2}}=\frac{n^2}{\sqrt{2}}.$$
On the other hand we now show that the supremum in the definition of $\varphi_2$ can be estimated above by
$$\sup_{\substack{a_k\in\{-1,0,1\}\\k=1,\ldots,n}}\left\Vert\sum_{k=1}^n a_k f^{(k)}\right\Vert_{\2}
\leq\frac{2}{\sqrt{3}}\,n^{3/2}.$$
At first we calculate
\begin{align*}
\sum_{k=1}^n a_k f^{(k)}
&=a_1e^{(1)}+\sum_{k=2}^nk\,a_ke^{(k)}-\sum_{k=1}^{n-1}k\,a_{k+1}e^{(k)}\\
&=n\,a_ne^{(n)}+\sum_{k=1}^{n-1}k\,(a_k-a_{k+1})e^{(k)}
\end{align*}
for arbitrary $a_1,\ldots,a_n$. Thus,
$$\left\Vert\sum_{k=1}^n a_k f^{(k)}\right\Vert_{\2}=\sqrt{n^2a_n^2+\sum_{k=1}^{n-1}k^2\,(a_k-a_{k+1})^2},$$
which attains its maximum over $(a_1,\ldots,a_n)\in\{-1,0,1\}^n$ for $a_k=(-1)^k$. Then
$$\sup_{\substack{a_k\in\{-1,0,1\}\\k=1,\ldots,n}}\left\Vert\sum_{k=1}^n a_k f^{(k)}\right\Vert_{\2}
=\sqrt{n^2+4\sum_{k=1}^{n-1}k^2}
=\sqrt{\frac{4}{3}n^3-n^2+\frac{2}{3}n}
\leq \frac{2}{\sqrt{3}}\,n^{3/2}.$$
Here, Examples~\ref{ex:hoelderrates} and \ref{ex:exponential} apply  and from these examples we see that the behaviour of the estimated sum in $\varphi_1$
and of the supremum in $\varphi_2$ directly carries over to the convergence rate. Thus, the slower growth of the supremum in
comparison to the faster growth of the sum yields a better rate for $\nu=\frac{3}{2}$ based on Theorem~\ref{thm:improved} than for $\nu=2$ based on Theorem~\ref{thm:BFH}.
}\end{example}

\begin{example}[diagonal operator]\label{ex:diagonal}{\rm
For a comparison we briefly recall Example~2.6 from \cite{BurFleHof13}, where $\widetilde A:X \to Y$ is a
{\sl compact diagonal operator}
between the separable Hilbert spaces $X$ and $Y$ with the singular system
$\{\sigma_k,u^{(k)},v^{(k)}\}_{k \in \N}$
and $\widetilde A u^{(k)}=\sigma_k\,v^{(k)},\;k \in \N$. Then the decay rate of the singular values $\sigma_k \to 0$ for $k \to \infty$ characterizes the degree of ill-posedness
of the equation (\ref{eq:tildeopeq}). For $\sigma_k \sim k^{-\zeta}, \;\zeta>0$, we have $\|Ae^{(k)}\|_Y=\|\widetilde A u^{(k)}\|_Y=\sigma_k \sim k^{-\zeta}$. The link condition (\ref{eq:allrange}) is satisfied
with $f^{(k)}=\frac{1}{\sigma_k}v^{(k)}$ and $\|f^{(k)}\|_Y \sim k^\zeta \to \infty$ as $k \to \infty$. Moreover, we have with some constant $C>0$
$$\sup_{\substack{a_k\in\{-1,0,1\}\\k=1,\ldots,n}}\left\Vert\sum_{k=1}^n a_k f^{(k)}\right\Vert_{Y} \,= \,\sqrt{\sum \limits_{k=1}^n \frac{1}{\sigma_k^2}} \,\le \, \sum \limits_{k=1}^n \frac{1}{\sigma_k}\, \le \, C\,n^{\zeta+1}, $$
hence $\nu=\zeta+1>1$ in Examples~\ref{ex:hoelderrates} and \ref{ex:exponential} based on Theorem~\ref{thm:improved}. Note that the values $\zeta>\frac{1}{2}$ and consequently $\nu>\frac{3}{2}$ correspond with the case
of Hilbert-Schmidt operators $\widetilde A$ and $\nu=\frac{3}{2}$ occurring in Example~\ref{ex:cesaro} is just a borderline case with respect to that fact.
}
\end{example}

\section{Ill-posedness of type I and II} \label{s4}

As suggested by M.~Z.~Nashed in \cite{Nashed86} we distinguish two types of ill-posedness for linear operator equations in a Banach space setting. Again, our focus is on injective operators.

\begin{definition} \label{def:type}
Let $B: Z_1 \to Z_2$ be an injective and bounded linear operator mapping between the infinite dimensional Banach spaces $Z_1$ and $Z_2$.  Then the operator equation
\begin{equation} \label{eq:B}
B x=y
\end{equation}
is called {\sl well-posed} if the range $\mathcal{R}(B)$ is a closed subset of $Z_2$, consequently {\sl ill-posed} if the range is not closed, i.e.~ $\mathcal{R}(B) \not= \overline{\mathcal{R}(B)}^{Z_2}$.
\par
In the ill-posed case, the equation (\ref{eq:B}) is called {\sl ill-posed of type I} if the range $\mathcal{R}(B)$ contains an {\sl infinite dimensional closed subspace}, and it is called {\sl ill-posed of type~II} otherwise.
\end{definition}

The two types of ill-posedness differ in the behavior of corresponding regularizers (cf.~\cite{Nashed86}) and with respect to smoothing properties of the linear operators $B$. If $B:=B_1: Z_1 \to Z_2$ is such that equation (\ref{eq:B})
proves to be ill-posed of type~I and $B:=B_2: Z_1 \to Z_2$ is such that equation (\ref{eq:B}) proves to be ill-posed of type~II, then $B_2$ tends to be `more smoothing' than $B_1$. Namely, a range inclusion $\mathcal{R}(B_2) \subset \mathcal{R}(B_1)$ may occur, but  $\mathcal{R}(B_1) \subset \mathcal{R}(B_2)$ cannot apply. We refer to \cite{BHTY06} for consequences of range inclusions and in particular to Example~10.2 ibidem for the interplay
of operators which characterize different types of ill-posedness.

The following proposition shows that at least for operators $B$ between Hilbert spaces $Z_1$ and $Z_2$ the type of
ill-posedness is determined by compactness properties of $B$.

\begin{proposition}\label{pro:Nashed}
If the operator equation (\ref{eq:B}) is well-posed or ill-posed of type~I, then the operator $B$ is non-compact.
Consequently, compactness of B implies ill-posedness of type~II.
\par
If $Z_1$ and $Z_2$ in equation (\ref{eq:B}) are Hilbert spaces and the equation is ill-posed,
then the equation is
ill-posed of type II if and only if $B$ is compact.
\end{proposition}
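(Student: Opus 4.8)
The plan is to treat the two assertions separately, reducing the first to the open mapping theorem and the nontrivial direction of the second to the spectral theorem for the positive self-adjoint operator $B^*B$.

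For the first assertion I would unify the two hypotheses through the single observation that in both cases the range $\mathcal{R}(B)$ contains an infinite-dimensional closed subspace $V$: in the well-posed case one may take $V=\mathcal{R}(B)$ itself, which is closed by assumption and infinite-dimensional because $B$ is injective on the infinite-dimensional space $Z_1$; in the type~I case such a $V$ exists by definition. I then set $W:=B^{-1}(V)=\{x\in Z_1:Bx\in V\}$, which is a closed subspace of $Z_1$ as the preimage of a closed set under the continuous map $B$, hence itself a Banach space. Since $V\subseteq\mathcal{R}(B)$ and $B$ is injective, the restriction $B|_W\colon W\to V$ is a continuous linear bijection between Banach spaces, so by the open mapping theorem it is a topological isomorphism. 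Were $B$ compact, then $B|_W$ would be compact as well, and the identity $I_W=(B|_W)^{-1}\circ B|_W$ would be compact; since $W$ is isomorphic to the infinite-dimensional space $V$, this is impossible. Thus $B$ is non-compact, and the stated consequence is just the contrapositive: compactness of $B$ rules out both well-posedness and type~I, leaving type~II as the only option.

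For the second assertion one implication is already contained in the first, namely that compactness forces type~II. The substance lies in the converse, which I would prove in contrapositive form: in the Hilbert space setting a non-compact $B$ produces an infinite-dimensional closed subspace inside $\mathcal{R}(B)$ and hence type~I. Here I would pass to the positive self-adjoint operator $T:=B^*B$, using the standard equivalence that $B$ is compact if and only if $T$ is compact. Non-compactness of $T$ then guarantees, via its spectral measure $E_T$, the existence of some $\varepsilon>0$ for which the spectral projection $P:=E_T([\varepsilon,\infty))$ has infinite rank; otherwise, with every such projection of finite rank, $T$ would be the norm limit of the finite-rank operators $T\,E_T([\varepsilon,\infty))$ as $\varepsilon\to 0$ and would be compact.

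Finally I would let $H:=\mathcal{R}(P)$, an infinite-dimensional closed subspace of $Z_1$ on which $\langle Tx,x\rangle\ge\varepsilon\|x\|^2$. Since $\langle Tx,x\rangle=\|Bx\|^2$, the restriction $B|_H$ obeys $\|Bx\|\ge\sqrt{\varepsilon}\,\|x\|$ for all $x\in H$, so it is bounded below, whence injective and of closed range; consequently $B(H)$ is an infinite-dimensional closed subspace contained in $\mathcal{R}(B)$. As the equation is assumed ill-posed, this places it in type~I, completing the contrapositive and hence the equivalence. I expect the main obstacle to be precisely this second assertion: the reduction to $T=B^*B$ and the extraction of a spectral subspace on which $B$ is bounded below are where the Hilbert space structure is genuinely needed, whereas the first assertion uses only the open mapping theorem and remains valid in arbitrary Banach spaces.
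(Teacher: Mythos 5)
Your proof is correct, and it splits into a part that matches the paper and a part that goes beyond it. For the first assertion you follow essentially the same route as the paper: both arguments place an infinite-dimensional closed subspace $V$ inside $\mathcal{R}(B)$ (the whole range in the well-posed case, the subspace from Definition~\ref{def:type} in the type~I case), pass to the preimage $W=B^{-1}(V)$, and derive a contradiction from the restriction $B|_W\colon W\to V$ being a surjective compact operator onto an infinite-dimensional Banach space; the paper phrases the contradiction as ``a compact operator has closed range only if its range is finite dimensional,'' while you invoke the open mapping theorem and the non-compactness of the identity on $W$ --- the same mechanism, since both rest on the non-compactness of the unit ball in an infinite-dimensional space. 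The genuine difference is in the second assertion: the paper does not prove it at all but cites the literature (Nashed's Theorem~4.6 and Douglas, Lemma~5.8 and Theorem~5.9), whereas you supply a self-contained argument via the spectral theorem for $T=B^*B$: non-compactness of $B$ gives non-compactness of $T$, hence an infinite-rank spectral projection $E_T([\varepsilon,\infty))$ for some $\varepsilon>0$, and on its range the identity $\|Bx\|^2=\langle Tx,x\rangle\ge\varepsilon\|x\|^2$ shows $B$ is bounded below, so the image of that subspace is an infinite-dimensional closed subspace of $\mathcal{R}(B)$, forcing type~I. Each of the ingredients you use ($T$ compact iff $B$ compact, finite rank of all projections $E_T([\varepsilon,\infty))$ forcing compactness of $T$, boundedness below implying closed range) is standard and correctly deployed. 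What your approach buys is a complete, reference-free proof that makes explicit exactly where the Hilbert space structure enters; what the paper's approach buys is brevity, at the cost of deferring the key equivalence to external sources.
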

\begin{proof}
If the operator equation (\ref{eq:B}) is well-posed or ill-posed of type~I, there is an infinite dimensional Banach space $\hat Z_2$ included in the subspace $\mathcal{R}(B)$ of $Z_2$ with the same norm as in $Z_2$.
The preimage $\hat Z_1:=B^{-1}(\hat Z_2)$ is a Banach space included in $Z_1$ with the same norm as in $Z_1$. For a compact operator $B$, also its restriction $B|_{\hat Z_1}: \hat Z_1 \to \hat Z_2$ would be compact and moreover surjective. This contradicts the fact that a compact operator has only a closed range if it has a finite dimensional range, being a consequence of the non-compactness of the unit ball in a infinite dimensional
Banach space.
\par
For Hilbert spaces $Z_1$ and $Z_2$ and ill-posed equations (\ref{eq:B}), the equivalence of ill-posedness of type~I and the non-compactness of $B$ is well-known (cf.~\cite[Thm.~4.6]{Nashed86} and \cite[Lemma~5.8 and Theorem~5.9]{Doug98}).
\end{proof}

If compactness of $B$ is replaced by strict singularity, the characterization of ill-posedness types can be
made more precise for injective operators $B$.

\begin{definition}\label{def:strictsingularity}
A bounded linear operator $B$ between Banach spaces $Z_1$ and $Z_2$ is \emph{strictly singular} if its restriction to an infinite dimensional
subspace is never an isomorphism.
\end{definition}

\begin{proposition}\label{pro:Nashedsingular}
Let $B$ be an injective bounded linear operator between Banach spaces $Z_1$ and $Z_2$.
Then equation~(\ref{eq:B}) is ill-posed of type II if and only if $B$ is strictly singular.
\end{proposition}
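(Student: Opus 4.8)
The plan is to prove the two implications separately, using throughout the standard reformulation of strict singularity (Definition \ref{def:strictsingularity}): since $B$ is injective, its restriction $B|_M$ to a subspace $M\subseteq Z_1$ is an isomorphism onto its image precisely when $B|_M$ is \emph{bounded below}, i.e.\ $\|Bx\|_{Z_2}\ge c\,\|x\|_{Z_1}$ for all $x\in M$ and some $c>0$. Thus $B$ is strictly singular if and only if $B$ is bounded below on no infinite dimensional subspace. The key analytic tool in both directions is the open mapping theorem, applied to $B$ restricted to suitable \emph{closed} subspaces, which are themselves Banach spaces.

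First I would establish the implication ``$B$ strictly singular $\Rightarrow$ equation (\ref{eq:B}) ill-posed of type~II''. Two things must be checked. (i) The range is not closed: if it were, then $B\colon Z_1\to\mathcal{R}(B)$ would be a continuous bijection onto a Banach space, hence an isomorphism by the open mapping theorem, so $B=B|_{Z_1}$ would be bounded below on the infinite dimensional space $Z_1$, contradicting strict singularity; hence the equation is ill-posed. (ii) The range contains no infinite dimensional closed subspace: if $V\subseteq\mathcal{R}(B)$ were closed and infinite dimensional, then $M:=B^{-1}(V)$ is closed (preimage of a closed set under a continuous map), and $B|_M\colon M\to V$ is a continuous bijection between Banach spaces, hence an isomorphism; moreover $M$ is infinite dimensional since it maps bijectively onto $V$, again contradicting strict singularity. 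Together (i) and (ii) say exactly that the equation is ill-posed of type~II.

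For the converse I would argue by contraposition: assuming $B$ is \emph{not} strictly singular, I would produce an infinite dimensional closed subspace inside $\mathcal{R}(B)$, so that the equation is not of type~II. By non-strict-singularity there is an infinite dimensional subspace $M$ on which $B$ is bounded below. The lower bound persists on the closure $\overline M$, which is still infinite dimensional, so $B|_{\overline M}$ is an isomorphism onto its image; since $\overline M$ is complete, $V:=B(\overline M)$ is a complete, hence closed, infinite dimensional subspace of $Z_2$ contained in $\mathcal{R}(B)$. Thus (\ref{eq:B}) is either well-posed or ill-posed of type~I, and in particular not of type~II.

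The computations are minimal; the main point to get right is the interplay between the definition of strict singularity (phrased for arbitrary, possibly non-closed, subspaces) and the closedness required to invoke the open mapping theorem. The principal subtlety is therefore the reduction to closed subspaces: in one direction, that a lower bound on $M$ passes to $\overline M$ and that $B|_{\overline M}$ then has complete, hence closed, image; in the other direction, that the passage $V\mapsto B^{-1}(V)$ keeps us inside closed subspaces, so that the open mapping theorem is legitimately applicable.
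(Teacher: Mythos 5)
Your proof is correct and takes essentially the same route as the paper's: both rest on the correspondence, via the open mapping theorem, between closed infinite dimensional subspaces of $\mathcal{R}(B)$ and infinite dimensional subspaces of $Z_1$ on which $B$ restricts to an isomorphism. You are in fact more careful than the paper on one point---passing from a subspace $M$ on which $B$ is bounded below to its closure $\overline{M}$ before concluding that the image is complete and hence closed, and noting explicitly that strict singularity rules out a closed range---both of which the paper's terse argument leaves implicit.
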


\begin{proof}
We show that there exists an isomorphic restriction of $B$ to an infinite dimensional subspace of $Z_1$ if
and only if $\mathcal{R}(B)$ contains a closed infinite dimensional subspace.
\par
Obviously, if $Z_3$ is a closed infinite dimensional subspace of $\mathcal{R}(B)$, then the corresponding preimage
is of infinite dimension and the restriction of $B$ to this preimage is an isomorphism.
On the other hand, if there is an isomorphic restriction to an infinite dimensional subspace of $Z_1$, then
its image is also of infinite dimension and closed.
\end{proof}

Now we are going to apply Definition~\ref{def:type} to the equations (\ref{eq:tildeopeq}) and  (\ref{eq:opeq}) and to interpret the different cases. First we distinguish in the subsequent remark the possible cases arising in the context of equation (\ref{eq:tildeopeq}).

\begin{remark} \label{rem:opeq} {\rm
\begin{itemize}
\item[]
\item[(a)]
\textbf{Well-posed case}: The equation (\ref{eq:tildeopeq}) can be well-posed, which takes place if $\widetilde A$ is normally solvable. Linear Volterra integral {\sl equations of the second kind} as well as more generally
linear Fredholm integral equations of the second kind with appropriate kernels represent typical
examples of this case, where $X=Y=L^2(\Omega)$ with some bounded and sufficiently regular domain $\Omega$ in $\R^l,\;l=1,2,...\,,$ and the operator $\widetilde A$ is of the form $\widetilde A = I-K$ with the identity operator $I$ and a compact operator $K$ such that zero does not belong to the spectrum of the operator $\widetilde A$.
Normal solvability also occurs if $X=Y$ and $\widetilde A=I$. Then solving
(\ref{eq:tildeopeq}), for given noisy data $\yd \in Y$, is the simplest case of a {\sl denoising} problem (cf.~Section~\ref{sec:denoise}).
\item[(b)]
\textbf{Ill-posed case}: The equation (\ref{eq:tildeopeq}) is ill-posed if the operator $\widetilde A$  fails to be normally solvable. This is just the case if the inverse $\widetilde A^{-1}: \mathcal{R}(\widetilde A) \subset Y \to X$  is unbounded.
\begin{itemize}
\item[(b1)\,]
\textbf{Type I}:
Operators $\widetilde A$ for equations (\ref{eq:tildeopeq}) which prove to be ill-posed of type I are non-compact and even not strictly singular. If $Y$ is also a Hilbert space, all ill-posed equations (\ref{eq:tildeopeq}) with non-compact operator $\widetilde A$ are of this type.  Multiplication operators in  $X=Y=L^2(a,b)$ with $L^\infty(a,b)$-multiplier functions possessing essential zeros and linear convolution
operators in $X=Y=L^2(\R^l)$ with square-integrable kernels represent typical examples for this case. Furthermore, the Hausdorff moment problem with $\widetilde A:L^2(0,1) \to \2$ (cf.~\cite[Example~3.2]{Hof99}) is of this type.
\item[(b2)\,]
\textbf{Type II, non-compact}:
If $Y$ is not a Hilbert space then there exist strictly singular operators $\widetilde A$ with nonclosed range which are not compact, for example the
embedding operators from $\2$ to $\ell^q$ with $2<q<\infty$ (cf.~\cite[Theorem (a)]{GoldThorp63}). This also leads to ill-posed equations (\ref{eq:tildeopeq}) of type~II. Such operators  $\widetilde A$ can have a
non-separable range $\mathcal{R}(\widetilde A)$ (cf.~\cite[Remark on p.~335]{GoldThorp63}).
\item[(b3)\,]
\textbf{Type II, compact}:
The equation (\ref{eq:tildeopeq}) is ill-posed of type II if $\widetilde A: X \to Y$ is a {\sl compact} operator. Then $\widetilde A$ is strictly singular and the range $\mathcal{R}(\widetilde A)$ is a separable space. Typical examples with compact operators $\widetilde A$ are linear Fredholm and Volterra integral {\sl equations of the first kind} with square-integrable kernels in $L^2$-spaces $X$ and $Y$ over bounded and sufficiently regular domains in $\R^l$. Moreover, all bounded linear operators $\widetilde A: \2 \to \ell^q$ are compact for $1 \le q <2$ as was mentioned in \cite{Tar72}. In particular, if also $Y$ is a Hilbert space, then vice versa ill-posedness of type~II requires compactness of $\widetilde A$.
\end{itemize}
\end{itemize}
}\end{remark}

The diagram in Figure~1 illustrates the different cases in Remark~\ref{rem:opeq} and the results of this section concerning the relations between compactness, strict singularity and type of ill-posedness
for injective forward operators. By the way, we should mention that there exist non-injective strictly singular operators possessing a range which contains an
infinite dimensional closed subspace (cf.~\cite[first example]{GoldThorp63}).

\begin{figure}[h]
\begin{center}
\begin{picture}(0,0)%
\includegraphics{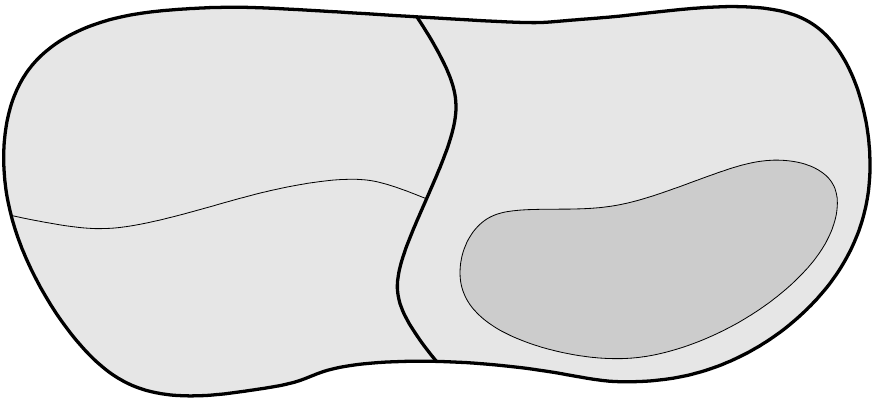}%
\end{picture}%
\setlength{\unitlength}{2072sp}%
\begingroup\makeatletter\ifx\SetFigFont\undefined%
\gdef\SetFigFont#1#2#3#4#5{%
  \reset@font\fontsize{#1}{#2pt}%
  \fontfamily{#3}\fontseries{#4}\fontshape{#5}%
  \selectfont}%
\fi\endgroup%
\begin{picture}(7986,3626)(2491,-5360)
\put(8191,-4246){\makebox(0,0)[b]{\smash{{\SetFigFont{8}{9.6}{\familydefault}{\mddefault}{\updefault}{\color[rgb]{0,0,0}operator compact}%
}}}}
\put(4481,-3493){\rotatebox{13.0}{\makebox(0,0)[b]{\smash{{\SetFigFont{8}{9.6}{\familydefault}{\mddefault}{\updefault}{\color[rgb]{0,0,0}operator not}%
}}}}}
\put(4574,-3785){\rotatebox{13.0}{\makebox(0,0)[b]{\smash{{\SetFigFont{8}{9.6}{\familydefault}{\mddefault}{\updefault}{\color[rgb]{0,0,0}strictly singular}%
}}}}}
\put(7839,-2998){\makebox(0,0)[b]{\smash{{\SetFigFont{8}{9.6}{\familydefault}{\mddefault}{\updefault}{\color[rgb]{0,0,0}operator strictly}%
}}}}
\put(7852,-3273){\makebox(0,0)[b]{\smash{{\SetFigFont{8}{9.6}{\familydefault}{\mddefault}{\updefault}{\color[rgb]{0,0,0}singular}%
}}}}
\put(4590,-2632){\makebox(0,0)[b]{\smash{{\SetFigFont{8}{9.6}{\rmdefault}{\bfdefault}{\updefault}{\color[rgb]{0,0,0}well-posed}%
}}}}
\put(8438,-2450){\makebox(0,0)[b]{\smash{{\SetFigFont{8}{9.6}{\rmdefault}{\bfdefault}{\updefault}{\color[rgb]{0,0,0}ill-posed of type II}%
}}}}
\put(4653,-4638){\makebox(0,0)[b]{\smash{{\SetFigFont{8}{9.6}{\rmdefault}{\bfdefault}{\updefault}{\color[rgb]{0,0,0}ill-posed of type I}%
}}}}
\end{picture}%
\end{center}
\caption{Relations between strict singularity, compactness and type of ill-posedness for equations~(\ref{eq:B}) with injective bounded linear operator.}
\end{figure}

A scenario completely different from Remark~\ref{rem:opeq} occurs for equation (\ref{eq:opeq}) due to the composition structure (\ref{eq:three}) of the operator $A$. The fact that the non-compact embedding operator $\mathcal{E}_2$ is strictly
singular (cf.~\cite[Theorem]{GoldThorp63})  prevents the occurrence of well-posedness and ill-posedness of type~I in the context of this equation.

\begin{proposition} \label{pro:onlyII}
Under the assumptions stated above equation (\ref{eq:opeq}) is \linebreak always ill-posed of type~II.
\end{proposition}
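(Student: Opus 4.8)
The plan is to reduce the claim to strict singularity of $A$ and then invoke Proposition~\ref{pro:Nashedsingular}. Since $A$ is the composition (\ref{eq:three}) of the three injective operators $\widetilde A$, $U$ and $\mathcal{E}_2$, it is itself injective, so Proposition~\ref{pro:Nashedsingular} applies and asserts that equation (\ref{eq:opeq}) is ill-posed of type~II if and only if $A$ is strictly singular. Thus everything reduces to verifying that $A$ is strictly singular. (By Proposition~\ref{pro:alwaysill} we already know that the range of $A$ is not closed, so the equation is genuinely ill-posed; but strict singularity of an injective operator forces non-closedness of the range anyway, so this is covered automatically.)

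First I would record the key input that the embedding $\mathcal{E}_2:\1\to\2$ is strictly singular, which is the cited Goldberg--Thorp result. The remaining task is to transfer strict singularity from $\mathcal{E}_2$ to $A=(\widetilde A\circ U)\circ\mathcal{E}_2$. This is exactly the (one-sided) operator-ideal property of strictly singular operators: postcomposing a strictly singular operator with a bounded operator again yields a strictly singular operator.

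Concretely, I would argue by contradiction. Suppose $A$ were not strictly singular, so that there is an infinite dimensional subspace $M\subset\1$ on which $A$ is bounded below, i.e.\ $\|Ax\|_Y\ge c\,\|x\|_{\1}$ for some $c>0$ and all $x\in M$. Writing $T:=\widetilde A\circ U:\2\to Y$, which is bounded, one has $\|Ax\|_Y=\|T\mathcal{E}_2 x\|_Y\le\|T\|\,\|\mathcal{E}_2 x\|_{\2}$, and hence $\|\mathcal{E}_2 x\|_{\2}\ge (c/\|T\|)\,\|x\|_{\1}$ for all $x\in M$. Thus $\mathcal{E}_2$ would be bounded below on the infinite dimensional subspace $M$, contradicting its strict singularity. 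Consequently $A$ is strictly singular, and Proposition~\ref{pro:Nashedsingular} completes the proof.

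The step I expect to require the most care is the clean formulation of this transfer argument: one must make sure that \emph{isomorphic restriction to an infinite dimensional subspace} is correctly interpreted as \emph{bounded below on an infinite dimensional subspace}, and that only boundedness of $\widetilde A\circ U$ (not its injectivity or any spectral property) enters the estimate. Notably, no compactness of $\widetilde A$ is used anywhere, which is precisely why the conclusion holds uniformly for all admissible forward operators, compact or not.
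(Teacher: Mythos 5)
Your proof is correct and follows essentially the same route as the paper: reduce via Proposition~\ref{pro:Nashedsingular} to showing that $A$ is strictly singular, invoke the Goldberg--Thorp result for $\mathcal{E}_2$, and transfer strict singularity through the composition (\ref{eq:three}). The only difference is that you prove the needed ideal property (postcomposition of a strictly singular operator with a bounded operator is strictly singular) explicitly via the bounded-below argument, whereas the paper simply cites it as a known fact.
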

\begin{proof}
Taking into account Proposition~\ref{pro:Nashedsingular} we only have to show that $A$ is always strictly singular.
But this follows immediately from the composition structure (\ref{eq:three}) and the two facts that
$\mathcal{E}_2$ is strictly singular (see~\cite{GoldThorp63}) and that the composition of a strictly singular operator with a bounded linear
operator is again strictly singular.
\end{proof}

\section{The special case of denoising} \label{sec:denoise}

Finally, we apply our results to a typical denoising problem. Given a noisy signal one wants to remove the noise.
For this purpose one decomposes the signal with respect to a wavelet basis (or any other orthonormal system) and tries to
find a sparse approximation with respect to this basis. Thus, in our setting we choose $\widetilde A$ to be the identity
on $\2$. Since in some applications it might be reasonable to measure the noise in a weaker norm we extend $Y$ to $\ell^q$
with $2\leq q\leq\infty$. Then $A:=\mathcal{E}_q$ is the embedding of $\1$ into $\ell^q$. In the sequel we only look at $A$ and therefore extend
the feasible values for $q$ to $1\leq q\leq\infty$.

The minimization problem (\ref{eq:TikBanach}) now reads as
\begin{equation} \label{eq:Tiklq}
\frac{1}{p}\|\mathcal{E}_q x-y^\delta\|_{\ell^q}^p + \alpha \,\|x\|_{\1}
\to \min, \quad \mbox{subject to} \quad x  \in \1,
\end{equation}
where the exact signal $x^\dagger$ is assumed to be nearly sparse, i.e.\ $x^\dagger\in\1$.
From the computational point of view it seems to be helpful to apply $p:=q$ for the exponent in the misfit term of (\ref{eq:Tiklq}) whenever $1<q<\infty$.
If we measure the error after denosing in the $\1$-norm as $\|\xad-\xdag\|_{\1}$, then the chances of having small errors improve with decreasing values $q$, since the strength of the norm in $Y$ grows if $q$ decreases.

\begin{proposition} \label{pro:lq}
For the embedding operator $A=\mathcal{E}_q$ from $\1$ to $\ell^q$ with $1<q \le \infty$ equation (\ref{eq:opeq}) is ill-posed of type~II.
We have weak convergence $Ae^{(k)} \rightharpoonup 0$ if $k\to\infty$ for $1<q \le \infty$, but no convergence in norm.
For all $1\leq q \le  \infty$ and all $k \in \N$ the link condition (\ref{eq:allrange}) is satisfied with $f^{(k)}=e^{(k)}$.
Thus, Theorems~\ref{thm:BFH} and \ref{thm:improved} apply and the corresponding index functions are
\begin{equation} 
\varphi_1(t)=2\inf_{n\in\mathbb{N}}\left(\sum_{k=n+1}^\infty\vert x^\dagger_k\vert
+t\,n\right)\qquad \quad \mbox{if} \quad 1\leq q\leq\infty
\end{equation}
and
\begin{equation} \label{eq:phiimp3}
\varphi_2(t)=2\inf_{n\in\mathbb{N}}\left(\sum_{k=n+1}^\infty\vert x^\dagger_k\vert+t\,n^\theta\right)\qquad
\end{equation}
with
$$\theta=\begin{cases}
1-\frac{1}{q},&\text{if}\quad 1\leq q<\infty,\\
1,&\text{if}\quad q=\infty.
\end{cases}$$
\end{proposition}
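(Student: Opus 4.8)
The plan is to verify the four assertions in turn: I reduce ill-posedness of type~II to a strict-singularity statement and invoke Proposition~\ref{pro:Nashedsingular}, I settle the convergence behaviour by direct duality computations, and I read off the index functions from the norms of the $f^{(k)}$.

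First I would prove that $A=\mathcal{E}_q\colon\1\to\ell^q$ is strictly singular for every $1<q\le\infty$, after which Proposition~\ref{pro:Nashedsingular} gives type~II at once. For $2\le q\le\infty$ this follows as in Proposition~\ref{pro:onlyII}, since then $\mathcal{E}_q=J\circ\mathcal{E}_2$ with $J\colon\2\to\ell^q$ the bounded embedding and $\mathcal{E}_2$ strictly singular, and the composition of a strictly singular with a bounded operator is strictly singular. To cover the remaining range $1<q<2$, where this factorization is unavailable, I would argue directly by a gliding-hump estimate (cf.\ \cite{GoldThorp63}). Given any infinite-dimensional subspace $M\subset\1$, the Bessaga--Pe\l{}czy\'nski selection principle yields, up to an arbitrarily small perturbation lying in $M$, a normalized block basic sequence $\{y_n\}$ with pairwise disjoint supports and $\|y_n\|_{\1}=1$. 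The averages $z_N:=\tfrac1N\sum_{n=1}^N y_n$ then satisfy $\|z_N\|_{\1}=1$, while disjointness of supports together with $\|y_n\|_{\ell^q}\le\|y_n\|_{\1}=1$ gives $\|z_N\|_{\ell^q}\le N^{1/q-1}\to0$ for $1<q<\infty$ and $\|z_N\|_{\3}\le 1/N\to0$ for $q=\infty$. Hence $\mathcal{E}_q$ is bounded below on no infinite-dimensional subspace, i.e.\ it is strictly singular.

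Next I would treat the convergence of $Ae^{(k)}=e^{(k)}$ in $\ell^q$. Absence of norm convergence is immediate, since $\|e^{(k)}\|_{\ell^q}=1$ for all $k$, so that (\ref{eq:strongA}) fails. For weak convergence with $1<q<\infty$ the space $\ell^q$ is reflexive with dual $\ell^{q'}$, $1/q+1/q'=1$, and for each $\eta\in\ell^{q'}$ one has $\langle\eta,e^{(k)}\rangle=\eta_k\to0$, whence $e^{(k)}\rightharpoonup0$. The only delicate point is $q=\infty$, where $(\ell^\infty)^\ast$ consists of finitely additive set functions of bounded variation: for such an $\eta$ a finite-sign-combination argument bounds $\sum_k|\eta(\{k\})|$ by $\|\eta\|$, so the point masses form an absolutely summable sequence, $\eta(\{k\})\to0$, and again $e^{(k)}\rightharpoonup0$.

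Finally I would check the link condition and evaluate the two functions. For every $1\le q\le\infty$ and $f^{(k)}:=e^{(k)}$ one has
$$\langle A^{*}e^{(k)},x\rangle_{\3\times\1}=\langle e^{(k)},Ax\rangle=x_k=\langle e^{(k)},x\rangle_{\3\times\1}\qquad(x\in\1),$$
so $A^{*}f^{(k)}=e^{(k)}$ and (\ref{eq:allrange}) holds; hence Theorems~\ref{thm:BFH} and~\ref{thm:improved} apply. Since $\|f^{(k)}\|_{(\ell^q)^\ast}=\|e^{(k)}\|_{\ell^{q'}}=1$, the sum in (\ref{eq:phiBFH}) equals $n$, giving the stated $\varphi_1$. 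For $\varphi_2$ the key computation is $\sum_{k=1}^n a_k e^{(k)}=(a_1,\dots,a_n,0,\dots)$, whose $\ell^{q'}$-norm is $\bigl(\sum_{k=1}^n|a_k|^{q'}\bigr)^{1/q'}$; maximizing over $a_k\in\{-1,0,1\}$ forces all $|a_k|=1$ and yields $n^{1/q'}=n^{1-1/q}$ for $1\le q<\infty$, while for $q=\infty$ the same vector, read through $f^{(k)}=e^{(k)}\in\1\subset(\ell^\infty)^\ast$, has variation norm $\sum_{k=1}^n|a_k|$, maximized at $n$, so $\theta=1$; this produces the exponents in (\ref{eq:phiimp3}). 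The main obstacle is the uniform treatment across the full range $1<q\le\infty$: the factorization through $\mathcal{E}_2$ only reaches $q\ge2$, so strict singularity for $1<q<2$ must be handled by the gliding-hump argument, and the non-reflexive case $q=\infty$ requires the separate $(\ell^\infty)^\ast$ computations both for weak convergence and for the supremum in $\varphi_2$.
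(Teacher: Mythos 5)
Your proof is correct, and its overall skeleton coincides with the paper's: reduce type-II ill-posedness to strict singularity of $\mathcal{E}_q$ via Proposition~\ref{pro:Nashedsingular} (respectively via Proposition~\ref{pro:onlyII} and the factorization through $\2$ when $2\le q\le\infty$), then verify the weak convergence, the link condition with $f^{(k)}=e^{(k)}$, and compute the two index functions. The genuine difference is how the key ingredient is obtained: for $1<q<2$ the paper simply cites the theorem of Goldberg and Thorp \cite{GoldThorp63} that the embeddings $\1\hookrightarrow\ell^q$ are strictly singular, whereas you reprove it from scratch by a block-basis/gliding-hump argument (disjointly supported normalized blocks $y_n$ close to the given subspace, averages $z_N$ with $\|z_N\|_{\1}=1$ but $\|z_N\|_{\ell^q}\le N^{1/q-1}\to 0$, resp.\ $\|z_N\|_{\3}\le 1/N$); note that this argument covers the whole range $1<q\le\infty$ uniformly, so the separate treatment of $q\ge 2$ via Proposition~\ref{pro:onlyII} is not even needed. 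Likewise, where the paper declares the remaining claims ``evident'' or ``easily computed'', you supply the actual verifications, including the only genuinely delicate ones at the non-reflexive endpoint $q=\infty$: weak convergence $e^{(k)}\rightharpoonup 0$ in $\3$ via the representation of $(\3)^*$ by finitely additive measures, whose point masses are absolutely summable by your sign-combination estimate, and the evaluation of the supremum in $\varphi_2$ through the isometric embedding $\1\subset(\3)^*$, which yields $\theta=1$. What the paper's route buys is brevity; what yours buys is a self-contained proof independent of \cite{GoldThorp63} that makes the case $q=\infty$ fully rigorous. One cosmetic remark: for $q=1$ the maximization in $\varphi_2$ does not ``force'' all $|a_k|=1$ (any nonzero admissible choice already gives $\ell^{\infty}$-norm one), but the resulting value $n^{0}=1$ is exactly the claimed $\theta=1-\frac{1}{q}=0$, so the conclusion stands.
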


\begin{proof}
The ill-posedness of type~II is a consequence of Proposition~\ref{pro:onlyII} whenever $2 \le q \le \infty$, because the injective and bounded embedding operator from $\ell^2$ to $\ell^q$ plays here the role of $\widetilde A$. On the other hand, the non-existence of an infinite dimensional closed subspace in $\ell^q$ for $1<q<2$ included in $\1$ also follows from the theorem in \cite{GoldThorp63}, since the corresponding embedding operators are strictly singular.
\par
It is evident that $e^{(k)}$ does not converge to zero in the norm of $\ell^q$, but we have $e^{(k)} \rightharpoonup 0$ in $\ell^q$ for all $1<q \le \infty$. Also the validity of (\ref{eq:allrange}) is evident.
\par
The index functions $\varphi_1$ and $\varphi_2$ in the convergence rates theorems can be computed easily for the special case
under consideration.
\end{proof}

\begin{example}[H\"older rates] {\rm If the decay rate  $\xdag_k \to 0$ as $k \to \infty$ of the remaining solution coefficients is of power type
\begin{equation} \label{eq:lqdecay}
|\xdag_k| \le C_{\xdag}\,k^{\mu-1},\; k \in \N, \quad\text{or equivalently}\quad\sum_{k=n+1}^\infty\vert x^\dagger_k\vert\leq K_{\xdag}\,n^{-\mu},\; n \in \N,
\end{equation}
with constants $\mu>0$ and $C_{\xdag},K_{\xdag}>0$, we immediately derive from Proposition~\ref{pro:lq} and Example~\ref{ex:hoelderrates}
the H\"older convergence rates for the denoising problem with forward operator $A=\mathcal{E}_q$ as
\begin{equation} \label{eq:lqrate}
\|\xad-\xdag\|_{\1}= O\left(\delta^{\frac{\mu}{\mu+1-\frac{1}{q}}}\right) \quad \mbox{as} \quad \delta \to 0 \qquad \mbox{if} \quad 1\leq q<\infty
\end{equation}
and
\begin{equation} \label{eq:lqrateinf}
\|\xad-\xdag\|_{\1}= O\left(\delta^{\frac{\mu}{\mu+1}}\right) \quad \mbox{as} \quad \delta \to 0 \qquad \mbox{if} \qquad q=+\infty.
\end{equation}
As expected the rate grows if $q$ decreases, i.e., if the noise is measured in a stronger norm.
On the other hand, the borderline case $q=1$ leads to a well-posed equation (\ref{eq:opeq}).
In this case the index function $\varphi_2$ attains the form
$$\varphi_2(t)=2\inf_{n\in\mathbb{N}}\left(\sum_{k=n+1}^\infty\vert x^\dagger_k\vert
+t \right)=2t\qquad \quad \mbox{if} \quad q =1$$
and the corresponding rate is
$$ \|\xad-\xdag\|_{\1}= O\left(\delta\right) \quad \mbox{as} \quad \delta \to 0 \qquad \mbox{if} \qquad q=1,$$
which is typical for well-posed situations.
}\end{example}

The example also shows that the improved index function $\varphi_2$ from Theorem~\ref{thm:improved} indeed provides a better convergence rate for
$1\leq q<\infty$ than the original index function $\varphi_1$ from Theorem~\ref{thm:BFH}.
Note that $\varphi_1$ in Proposition~\ref{pro:lq} is for all $q$ the same function as $\varphi_2$ with $q=\infty$.

At the end we should mention that the rate results (\ref{eq:lqrate}) and (\ref{eq:lqrateinf}) yield the values $0<\nu \le 1$ in formula (\ref{firstrate}) from  Example~\ref{ex:hoelderrates}. Consequently,
the H\"older rates in Examples~\ref{ex:cesaro} and \ref{ex:diagonal} with $\nu>1$ are always lower than the observed rates for the denoising case, which indicates a lower degree of ill-posedness for the denoising problem.

\subsection*{Acknowledgement}
The authors very appreciate the fruitful discussion with Radu I.~Bo\c{t} (University of Vienna) and are particularly grateful that he brought the paper \cite{GoldThorp63} to our attention. We also express our thanks to Peter Stollmann and Thomas Kalmes (TU Chemnitz) for valuable hints.
Jens Flemming and Bernd Hofmann were supported by the German Research Foundation (DFG) under grants FL~832/1-1 and  HO~1454/8-2, respectively. Ivan Veseli\'c was supported by  the PPP-grant 56266051 of the DAAD and the Ministry of Science of the Republic of Croatia.

%

\begin{thebibliography}{10}

\bibitem{AnzHofMat14}
S.~W. Anzengruber, B.~Hofmann, and P.~Math{\'e}.
\newblock Regularization properties of the sequential discrepancy principle for
  {T}ikhonov regularization in {B}anach spaces.
\newblock {\em Appl. Anal.}, 93:1382--1400, 2014.

\bibitem{AnzHofRam13}
S.~W. Anzengruber, B.~Hofmann, and R.~Ramlau.
\newblock On the interplay of basis smoothness and specific range conditions
  occurring in sparsity regularization.
\newblock {\em Inverse Problems}, 29:125002 (21pp), 2013.

\bibitem{BeBu11}
M.~Benning and M.~Burger.
\newblock Error estimates for general fidelities.
\newblock {\em Electronic Transactions on Numerical Analysis}, 38:44--68, 2011.

\bibitem{BoHo10}
R.~I. Bo\c{t} and B.~Hofmann.
\newblock An extension of the variational inequality approach for obtaining
  convergence rates in regularization of nonlinear ill-posed problems.
\newblock {\em Journal of Integral Equations and Applications}, 22:369--392,
  2010.

\bibitem{BoHo13}
R.~I. Bo\c{t} and B.~Hofmann.
\newblock The impact of a curious type of smoothness conditions on convergence
  rates in $\ell^1$-regularization.
\newblock {\em Eurasian Journal of Mathematical and Computer Applications},
  1:29--40, 2013.

\bibitem{BHTY06}
A.~B{\"o}ttcher, B.~Hofmann, U.~Tautenhahn, and M.~Yamamoto.
\newblock Convergence rates for {T}ikhonov regularization from different kinds
  of smoothness conditions.
\newblock {\em Appl. Anal.}, 85:555--578, 2006.

\bibitem{BreLor09}
K.~Bredies and D.~A. Lorenz.
\newblock Regularization with non-convex separable constraints.
\newblock {\em Inverse Problems}, 25:085011 (14pp), 2009.

\bibitem{BroHalShi65}
A.~Brown, P.~R. Halmos, and A.~L. Shields.
\newblock Ces{\`a}ro operators.
\newblock {\em Acta Sci. Math. (Szeged)}, 26:125--137, 1965.

\bibitem{BurFleHof13}
M.~Burger, J.~Flemming, and B.~Hofmann.
\newblock Convergence rates in {$\ell\sp 1$}-regularization if the sparsity
  assumption fails.
\newblock {\em Inverse Problems}, 29:025013 (16pp), 2013.

\bibitem{CanRomTao06}
E.~J. Cand{\`e}s, J.~K. Romberg, and T.~Tao.
\newblock Stable signal recovery from incomplete and inaccurate measurements.
\newblock {\em Comm. Pure Appl. Math.}, 59:1207--1223, 2006.

\bibitem{Doug98}
R.~G. Douglas.
\newblock {\em Banach Algebra Techniques in Operator Theory}, volume 179 of
  {\em Graduate Texts in Mathematics}.
\newblock Springer-Verlag, New York, second edition, 1998.

\bibitem{Flemmingbuch12}
J.~Flemming.
\newblock {\em {G}eneralized {T}ikhonov {R}egularization and {M}odern
  {C}onvergence {R}ate {T}heory in {B}anach {S}paces}.
\newblock Shaker Verlag, Aachen, 2012.

\bibitem{FleHeg14}
J.~Flemming and M.~Hegland.
\newblock Convergence rates in $\ell^1$-regularization when the basis is not
  smooth enough.
\newblock {\em Appl. Anal.}, 94:464--476, 2015.

\bibitem{GoldThorp63}
S.~Goldberg and E~Thorp.
\newblock On some open questions concerning strictly singular operators.
\newblock {\em Proc.~Amer.~Math.~Soc.}, 14:334--336, 1963.

\bibitem{Grasm10}
M.~Grasmair.
\newblock Generalized {B}regman distances and convergence rates for non-convex
  regularization methods.
\newblock {\em Inverse Problems}, 26:115014 (16pp), 2010.

\bibitem{Grasmei11}
M.~Grasmair, M.~Haltmeier, and O.~Scherzer.
\newblock Necessary and sufficient conditions for linear convergence of
  {$\ell\sp 1$}-regularization.
\newblock {\em Comm. Pure Appl. Math.}, 64:161--182, 2011.

\bibitem{Halmos82}
P.~R. Halmos.
\newblock {\em A {H}ilbert Space Problem Book}, volume~19 of {\em Graduate
  Texts in Mathematics}.
\newblock Springer-Verlag, New York-Berlin, second edition, 1982.
\newblock Encyclopedia of Mathematics and its Applications, 17.

\bibitem{Hof99}
B.~Hofmann.
\newblock {\em Mathematik inverser {P}robleme}.
\newblock B. G. Teubner Verlagsgesellschaft mbH, Stuttgart, 1999.

\bibitem{HKPS07}
B.~Hofmann, B.~Kaltenbacher, C.~P\"{o}schl, and O.~Scherzer.
\newblock A convergence rates result for {T}ikhonov regularization in {B}anach
  spaces with non-smooth operators.
\newblock {\em Inverse Problems}, 23:987--1010, 2007.

\bibitem{HofMat07}
B.~Hofmann and P.~Math{\'e}.
\newblock Analysis of profile functions for general linear regularization
  methods.
\newblock {\em SIAM J.~Numer.~Anal.}, 45:1122--1141, 2007.

\bibitem{HofMat12}
B.~Hofmann and P.~Math{\'e}.
\newblock Parameter choice in {B}anach space regularization under variational
  inequalities.
\newblock {\em Inverse Problems}, 28:104006 (17pp), 2012.

\bibitem{ItoJin14}
K.~Ito and B.~Jin.
\newblock {\em Inverse Problems - Tikhonov Theory and Algorithms}, volume~22 of
  {\em Series on Applied Mathematics}.
\newblock World Scientific, Singapore, 2014.

\bibitem{Lorenz08}
D.~A. Lorenz.
\newblock Convergence rates and source conditions for {T}ikhonov regularization
  with sparsity constraints.
\newblock {\em J. Inverse Ill-Posed Probl.}, 16:463--478, 2008.

\bibitem{MatPer03}
P.~Math{\'e} and S.~V. Pereverzev.
\newblock Geometry of linear ill-posed problems in variable {H}ilbert scales.
\newblock {\em Inverse Problems}, 19:789--803, 2003.

\bibitem{Megginson98}
R.~E. Megginson.
\newblock {\em An Introduction to Banach Space Theory}, volume 183 of {\em
  Graduate Texts in Mathematics}.
\newblock Springer-Verlag, New York, 1998.

\bibitem{Nashed86}
M.~Z. Nashed.
\newblock A new approach to classification and regularization of ill-posed
  operator equations.
\newblock In {\em {I}nverse and {I}ll-posed {P}roblems ({S}ankt {W}olfgang,
  1986), volume~4 of Notes Rep.~Math.~Sci.~Engrg.}, pages 53--75. Academic
  Press, Boston, MA, 1987.

\bibitem{RamRes10}
R.~Ramlau and E.~Resmerita.
\newblock Convergence rates for regularization with sparsity constraints.
\newblock {\em Electron. Trans. Numer. Anal.}, 37:87--104, 2010.

\bibitem{Scherzetal09}
O.~Scherzer, M.~Grasmair, H.~Grossauer, M.~Haltmeier, and F.~Lenzen.
\newblock {\em {V}ariational {M}ethods in {I}maging}, volume 167 of {\em
  Applied Mathematical Sciences}.
\newblock Springer, New York, 2009.

\bibitem{Schusterbuch12}
T.~Schuster, B.~Kaltenbacher, B.~Hofmann, and K.~S. Kazimierski.
\newblock {\em Regularization {M}ethods in {B}anach {S}paces}, volume~10 of
  {\em Radon Series on Computational and Applied Mathematics}.
\newblock Walter de Gruyter, Berlin/Boston, 2012.

\bibitem{Tar72}
E.~Tarafdar.
\newblock A note on the bounded linear operators on the spaces {$l\sp{p}$} and
  {$L\sb{p}$}.
\newblock {\em J.~Math.~Anal.~Appl.}, 40:683--686, 1972.

\end{thebibliography}
%

\end{document}